\documentclass[11pt]{article}
\usepackage{amsmath,amssymb,amsthm}
\usepackage[margin=1.1in]{geometry} 
\usepackage[colorlinks=true,urlcolor=blue]{hyperref}

\newtheorem{theorem}{Theorem}
\newtheorem{proposition}[theorem]{Proposition}
\newtheorem{lemma}[theorem]{Lemma}
\theoremstyle{definition}
\newtheorem{definition}[theorem]{Definition}
\newtheorem{corollary}[theorem]{Corollary}
\newtheorem{conjecture}[theorem]{Conjecture}
\theoremstyle{definition}
\newtheorem{example}[theorem]{Example}
\newtheorem{remark}[theorem]{Remark}

\newcommand{\Z}{\mathbb{Z}}
\newcommand{\F}{\mathbb{F}_2}
\newcommand{\ind}[1]{[\,#1\,]}
\DeclareMathOperator{\ord}{ord}

\title{Three Quadratic Recurrences Modulo 3:\\ What We Can Prove and What We Can Guess}
\author{Steven Finch\\
MIT Sloan School of Management}
\date{\today}

\begin{document}
\maketitle

\begin{abstract}
We study three innocent-looking recurrences modulo $3$, each built from
squares of two earlier terms. One of them turns out to be secretly
\emph{linear}, and we can compute its periods exactly. One is a
\emph{permutation} hiding a small, perfectly orderly piece inside a large,
random-looking one. The third looks thoroughly random, but it still has a few
provable features. Along the way we meet open problems that need nothing
beyond a first course in abstract algebra to state.  This paper is a starting 
point, not the final word in any sense.
\end{abstract}

\section{The setup}

Fix an integer $k\ge 1$ and start a sequence $x_1,x_2,\dots$ of elements of
$\Z/3=\{0,1,2\}$ with
\[
x_1=x_2=\dots=x_{k-1}=0,\qquad x_k=1 .
\]
We continue it with one of three rules (all arithmetic mod $3$):
\begin{align*}
\text{(A)}\quad & x_n = x_{n-1}+x_{n-k}^2,\\
\text{(B)}\quad & x_n = x_{n-1}^2+x_{n-k},\\
\text{(C)}\quad & x_n = 2\bigl(x_{n-1}^2+x_{n-1}+x_{n-k}^2+x_{n-k}\bigr).
\end{align*}
Each new term depends on the previous $k$ terms, and there are only $3^k$
possible blocks of $k$ terms. So every sequence is \emph{eventually
periodic}: there are a least period $N\ge 1$ and a least pre-period
$\mu\ge 0$ such that $x_{n+N}=x_n$ for all $n>\mu$. We write $N_A(k)$,
$N_B(k)$, $N_C(k)$ (and similarly $\mu$) for the three rules.

\paragraph{The one trick we need.}
Write $\ind{P}$ for $1$ if the statement $P$ is true and $0$ if it is false.

\begin{lemma}\label{lem:squares}
For $c\in\Z/3$ we have $c^2=\ind{c\neq 0}$ and $2(c^2+c)=\ind{c=1}$.
\end{lemma}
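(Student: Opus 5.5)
Since $\Z/3$ has only three elements, the plan is a direct case check over $c\in\{0,1,2\}$. I will tabulate $c^2$ and $2(c^2+c)$ reduced mod $3$, and compare them with the indicator values $\ind{c\neq 0}$ and $\ind{c=1}$.

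For the first identity, I compute $0^2=0$, $1^2=1$ and $2^2=4\equiv 1$. These match $\ind{c\neq0}$, which equals $0,1,1$ respectively. Conceptually, this is Fermat's little theorem: $c^{2}=c^{p-1}=1$ for every nonzero $c$ in the field $\Z/3$.

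For the second identity, I compute $c^2+c$ as $0$, $2$, and $4+2=6\equiv 0$. Multiplying by $2$ gives $0$, $4\equiv1$, and $0$, which matches $\ind{c=1}$. A structural cross-check is that $c^2+c=c(c+1)$ vanishes exactly at $c=0$ and $c=2=-1$. At $c=1$ it equals $2$, and $2$ is its own inverse mod $3$, so multiplying by $2$ normalizes the value to $1$.

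There is no genuine obstacle here. The only point needing care is reducing $4$ and $6$ modulo $3$ correctly, and confirming that the factor $2$ is exactly the inverse of $c^2+c$ at $c=1$.
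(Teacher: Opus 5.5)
Your proposal is correct and is essentially the paper's proof: the paper also just tabulates $c^2$ and $2(c^2+c)$ for $c=0,1,2$, and your values ($0,1,1$ and $0,1,0$) match its table. The Fermat and $c(c+1)$ remarks are correct extra context but not needed.
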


\begin{proof}
Check the three cases:
\[
\begin{array}{c|ccc}
c & 0 & 1 & 2\\ \hline
c^2 & 0 & 1 & 1\\
2(c^2+c) & 0 & 1 & 0
\end{array}
\qedhere
\]
\end{proof}

So the three rules really read
\begin{align*}
\text{(A)}\quad & x_n = x_{n-1}+\ind{x_{n-k}\neq 0},\\
\text{(B)}\quad & x_n = x_{n-k}+\ind{x_{n-1}\neq 0},\\
\text{(C)}\quad & x_n = \ind{x_{n-1}=1}+\ind{x_{n-k}=1}.
\end{align*}
Rules (A) and (B) use the same two ingredients with the lags swapped. As we
will see, they behave completely differently.

\section{Recurrence (C): secretly linear}

Some data:
\[
\begin{array}{c|ccccccccccc}
k & 2&3&4&5&6&7&8&9&10&11&12\\ \hline
N_C(k) & 3&7&15&21&63&127&63&73&889&1533&3255
\end{array}
\]
and $\mu_C(k)=1$ for every $k\ge 2$. Many of these numbers are of the form $2^j-1$,
which hints at the field $\F=\{0,1\}$.

For a polynomial $f\in\F[x]$ with $f(0)=1$, its \emph{order} $\ord f$ is the
least $m\ge 1$ with $f(x)$ dividing $x^m-1$ (equivalently, $x^m\equiv 1 \pmod f$;
such an $m$ exists because $x$ is a unit modulo $f$). A polynomial of degree
$k$ is \emph{primitive} if its order is $2^k-1$; for both notions see
\cite[Ch.~3, \S1]{LN}.

\begin{theorem}\label{thm:C}
For $k\ge 2$ we have $\mu_C(k)=1$ and $N_C(k)=\ord(x^k+x+1)$.
In particular $N_C(k)\le 2^k-1$, with equality exactly when $x^k+x+1$ is
primitive.
\end{theorem}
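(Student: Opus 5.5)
The plan is to reduce (C) to a linear recurrence over $\F$ by recording only whether each term equals $1$. Set $y_n=\ind{x_n=1}\in\F$. By Lemma~\ref{lem:squares}, for $n>k$ we have $x_n=\ind{x_{n-1}=1}+\ind{x_{n-k}=1}=y_{n-1}+y_{n-k}$, where the sum of two bits is computed as an integer in $\{0,1,2\}$. Hence $x_n=1$ exactly when one of the two bits is $1$, which gives
\[
y_n=y_{n-1}+y_{n-k}\quad\text{in }\F\qquad (n>k).
\]
The initial block $y_1=\dots=y_{k-1}=0$, $y_k=1$ agrees with that of $x$. In fact $x_n\equiv y_n\pmod 2$ for \emph{every} $n\ge 1$: for $n\le k$ this holds because $x_n\in\{0,1\}$, and for $n>k$ it is the displayed relation. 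So the tail of $x$ and the sequence $y$ determine each other. On one side, $x_n=y_{n-1}+y_{n-k}$ for $n>k$; on the other, $y_n=x_n \bmod 2$.

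Next I will compute the period of $y$. Its characteristic polynomial is $g(x)=x^k+x^{k-1}+1$, which is the reciprocal of $f(x)=x^k+x+1$, and $\ord g=\ord f$. Since $g(0)=1$, the state map is invertible, so $y$ is purely periodic. Its initial state $(0,\dots,0,1)$ is the impulse response of the recurrence, so its minimal polynomial is all of $g$. By the standard fact in \cite{LN} that the least period of a linear recurring sequence equals the order of its minimal polynomial, the least period of $y$ is $\ord g=\ord f$. One could also argue this directly: the states of $y$ cycle with period equal to the order of the companion matrix acting on a cyclic vector. This minimal-polynomial step is the only place needing care. I would either quote \cite{LN} or note that the shifts of the impulse response span all of $\F^k$.

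Now I transfer the period to $x$. Because $y_n\equiv x_n \pmod 2$, any period of the tail of $x$ is a period of $y$. Because $x_n=y_{n-1}+y_{n-k}$ for $n>k$, any period of $y$ is a period of $x_n$ for $n>k$. Hence $N_C(k)=\ord f$.

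For the pre-period, I will extend $y$ backwards periodically; this is legitimate since $y$ is purely periodic and the recurrence is invertible. Define $x'_n=y_{n-1}+y_{n-k}$ for all $n\ge1$. This is purely periodic with period $N=\ord f$ and agrees with $x_n$ for $n>k$. Solving the recurrence backwards from $y_1,\dots,y_k$ gives $y_0=1$ (from $y_k=y_{k-1}+y_0$), $y_{1-k}=1$ (from $y_1=y_0+y_{1-k}$), and $y_{n-k}=0$ for $2\le n\le k-1$. Therefore $x'_1=2\ne 0=x_1$, while $x'_n=0=x_n$ for $2\le n\le k-1$ and $x'_k=y_{k-1}+y_0=1=x_k$. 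So $x_{n+N}=x_n$ holds for all $n\ge2$ and fails at $n=1$, which gives $\mu_C(k)=1$.

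Finally, the bound $N_C(k)\le 2^k-1$ holds because there are only $2^k-1$ nonzero states of $y$. Equality is exactly the definition of $f$ being primitive.
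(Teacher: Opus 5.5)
Your proposal is correct and follows essentially the same route as the paper. You reduce to $y_n=\ind{x_n=1}$ with $y_n=y_{n-1}+y_{n-k}$ over $\F$, identify its period with the order of the companion matrix of $x^k+x^{k-1}+1$, and get $\mu_C(k)=1$ from the same backward extension $y_0=y_{1-k}=1$, which gives $y_0+y_{1-k}=2\neq 0=x_1$; the only differences are that the paper proves the cyclic-vector step directly via the triangular basis $v_k,\dots,v_{2k-1}$ instead of quoting the minimal-polynomial theorem, and your two-way period transfer through $x_n\equiv y_n\pmod 2$ spells out the paper's remark that $y_n=\ind{x_n=1}$ recovers $y$.
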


\begin{proof}
\emph{Step 1: reduce to $\F$.}
Put $y_n=\ind{x_n=1}$. By the rule, $x_n=1$ exactly when exactly one of
$y_{n-1},y_{n-k}$ equals $1$. Hence, for $n\ge k+1$,
\[
y_n = y_{n-1}+y_{n-k}\quad\text{in }\F ,
\]
a \emph{linear} recurrence, with $y_1=\dots=y_{k-1}=0$ and $y_k=1$.

\emph{Step 2: the period of $y$.}
Let $v_n=(y_{n-k+1},\dots,y_n)\in\F^k$. Then $v_{n+1}=Av_n$, where $A$ is
the companion matrix of $p(t)=t^k+t^{k-1}+1$. Since $p(0)=1$, $A$ is invertible, 
so $y$ is purely periodic and can be implemented backwards. The vectors
$v_k,v_{k+1},\dots,v_{2k-1}$ form a basis: they are ``triangular'', because
the first $1$ of $y$ moves one place left each time. If $A^mv_k=v_k$, then
$A^m$ fixes every $A^iv_k$, so $A^m=I$. The period of $y$ is therefore the
order of the matrix $A$. That is the least $m$ with $p\mid t^m-1$, because
$p$ is the minimal polynomial of its companion matrix. Finally, $p$ and its
reversal $x^k+x+1$ divide exactly the same polynomials $x^m-1$, so they have
the same order. (This is the standard link between linear recurring sequences 
and orders of polynomials; see \cite[Ch.~8]{LN}.) The orbit of the nonzero 
vector $v_k$ never reaches $0$, so the period is at most $2^k-1$.

\emph{Step 3: back to $x$, and $\mu=1$.}
Implementing $y_n=y_{n-1}+y_{n-k}$ backwards from the initial data gives $y_0=1$,
$y_{1-k}=1$, and $y_{2-k}=\dots=y_{-1}=0$. A direct check shows
\[
x_n=y_{n-1}+y_{n-k}\quad(\text{as integers})\qquad\text{for all } n\ge 2 .
\]
For $n\ge k+1$ this is the rule itself, and for $2\le n\le k$ it is a quick
check. The right-hand side is purely periodic with the same period as $y$
(since $y_n=\ind{x_n=1}$ recovers $y$). At $n=1$ it equals
$y_0+y_{1-k}=2\neq 0=x_1$. So the sequence is periodic from $n=2$ on but not
from $n=1$, and $\mu_C(k)=1$.
\end{proof}

\begin{remark}
	The polynomial $x^k+x+1$ has no repeated factors, so $N_C(k)$ is the
	least common multiple of the orders of its irreducible factors
	\cite[Lemmas~3.1--3.2]{B}. For instance $x^5+x+1=(x^2+x+1)(x^3+x^2+1)$ gives
	$N_C(5)=\operatorname{lcm}(3,7)=21$.
\end{remark}

Parts (2) and (3) below are the easy halves of Bartholdi's
Propositions~2.2--2.3 \cite{B}.

\begin{corollary}\label{cor:C}
Let $q=2^j$.
\begin{enumerate}
\item If $k\equiv 2\pmod 3$, then $3\mid N_C(k)$.
\item $N_C(q)$ divides $q^2-1$.
\item $N_C(q+1)$ divides $q^2+q+1$.
\end{enumerate}
\end{corollary}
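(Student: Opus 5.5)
By Theorem~\ref{thm:C}, each part becomes a statement about $\ord f$ for $f=x^k+x+1$ over $\F$. We use two basic facts. First, if $g\mid f$, then $\ord g\mid\ord f$. Second, if $f\mid x^m-1$, then $\ord f\mid m$. Part~(1) uses the first fact; parts~(2) and~(3) use the second, by finding a suitable $m$ with $x^m\equiv 1\pmod f$.

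For part~(1), let $\omega$ be a root of $x^2+x+1$ in $\mathbb{F}_4$. Then $\omega^3=1$. If $k\equiv 2\pmod 3$, we get $\omega^k+\omega+1=\omega^2+\omega+1=0$. Hence $x^2+x+1$ divides $x^k+x+1$, because the former is irreducible and has $\omega$ as a root. Since $\ord(x^2+x+1)=3$, the first fact gives $3\mid N_C(k)$.

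For part~(2), take $k=q$ and work modulo $f=x^q+x+1$, so that $x^q\equiv x+1$. The Frobenius map $g\mapsto g^2$ is a ring endomorphism of $\F[x]/(f)$, and $q$ is a power of $2$. Therefore
\[
x^{q^2}=(x^q)^q\equiv (x+1)^q=x^q+1\equiv x .
\]
The class of $x$ is a unit (since $f(0)=1$), so we may cancel it and get $x^{q^2-1}\equiv 1$. The second fact then gives $N_C(q)\mid q^2-1$.

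For part~(3), take $k=q+1$ and $f=x^{q+1}+x+1$, so that $x\cdot x^q\equiv x+1$. The aim is to show $x^{q^3}\equiv x$, from which $\ord f\mid q^3-1=(q-1)(q^2+q+1)$; alone this is too weak. The sharper route is to show directly that $x^{q^2+q+1}\equiv 1$. Put $u=x^q$, so $xu=x+1$. Applying Frobenius $j$ times gives $u\cdot x^{q^2}=u+1$, so $x^{q^2}=(u+1)/u$. Then
\[
x^{1+q+q^2}=x\cdot u\cdot\frac{u+1}{u}=x(u+1)=xu+x=(x+1)+x=1 .
\]
Both divisions are legitimate because $u$ is a unit modulo $f$. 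Hence $N_C(q+1)\mid q^2+q+1$.

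The main obstacle is part~(3): we must spot the norm-like product $x\cdot x^q\cdot x^{q^2}$ and apply Frobenius to the relation $x^{q+1}=x+1$, rather than only iterating $x\mapsto x^q$. Once that product is in view, the computation above settles it.
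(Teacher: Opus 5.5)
Your proof is correct and matches the paper's: part (2) is identical, part (1) only checks the divisibility through a root $\omega$ of $x^2+x+1$ instead of reducing $x^k$ modulo $x^2+x+1$, and in part (3) the substitution $u=x^q$ and the division by $u$ just repackage the paper's computation $x^{q^2+q}\equiv(x+1)^q=x^q+1$ followed by multiplying by $x$. The aside about aiming for $x^{q^3}\equiv x$ is unnecessary and can be dropped.
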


\begin{proof}
(1) For $k\equiv 2\pmod 3$ we have $x^k+x+1\equiv x^2+x+1\pmod{x^2+x+1}$,
because $x^3\equiv1$ modulo $x^2+x+1$. Hence $x^2+x+1$ divides $x^k+x+1$. A
divisor's order divides the order, and $\ord(x^2+x+1)=3$.

For (2) and (3) we use the ``freshman's dream'' $(a+b)^2=a^2+b^2$, valid in
characteristic $2$, which gives $(a+b)^q=a^q+b^q$.

(2) Modulo $f=x^q+x+1$ we have $x^q\equiv x+1$. So
$x^{q^2}\equiv(x+1)^q=x^q+1\equiv x$, and $x^{q^2-1}\equiv 1$ because $x$ is
a unit.

(3) Modulo $f=x^{q+1}+x+1$ we have $x^{q+1}\equiv x+1$. So
$x^{q^2+q}\equiv(x+1)^q=x^q+1$, and therefore
$x^{q^2+q+1}\equiv x^{q+1}+x\equiv 1$.
\end{proof}

In fact equality holds in (2) and (3): $N_C(2^j)=4^j-1$ for $j\ge1$ and
$N_C(2^j+1)=4^j+2^j+1$ for $j\ge 0$, as proposed in \cite{IMO}; see the 
proof in \cite{B} and a list of periods in OEIS A046932 \cite{OEIS}.

\begin{conjecture}
$N_C(2^j-1)=2^{N_C(j)}-1$ for $j\ge 2$.
\end{conjecture}

Bartholdi verified this for $j\le 16$ by computer; for instance
$N_C(3)=2^3-1$, $N_C(7)=2^7-1$, $N_C(15)=2^{15}-1$, and $N_C(31)=2^{21}-1$
(note $N_C(5)=21$). He also proved half of the 
conjecture \cite[Prop.~2.4]{B}; here is a short proof in our language.

\begin{proposition}
For $j\ge2$, $N_C(2^j-1)$ divides $2^{N_C(j)}-1$.
\end{proposition}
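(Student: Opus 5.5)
Write $K=2^j-1$, $f=x^{K}+x+1$ and $m=N_C(j)=\ord(x^j+x+1)$. By Theorem~\ref{thm:C}, $N_C(K)=\ord f$, so it suffices to prove $x^{2^m-1}\equiv 1\pmod f$. Since $x$ is a unit modulo $f$, this is equivalent to
\[
x^{2^m}\equiv x \pmod f .
\]
We work in the ring $R=\F[x]/(f)$, where the Frobenius map $u\mapsto u^2$ is a ring endomorphism. Iterating it gives $(a+b)^{2^i}=a^{2^i}+b^{2^i}$, the freshman's dream already used in Corollary~\ref{cor:C}.

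The key relation comes from $x^{2^j}=x\cdot x^{K}$. In $R$ we have $x^K=x+1$, so
\[
x^{2^j}=x^2+x .
\]
Define $z_i=x^{2^i}\in R$. Then $z_j=z_1+z_0$. Applying Frobenius $i$ times, which is additive, gives
\[
z_{i+j}=z_{i+1}+z_i\qquad(i\ge 0).
\]
So $(z_i)$ satisfies a linear recurrence over $\F$ with characteristic polynomial $t^j+t+1$. In operator language, let $\sigma$ denote Frobenius acting on $R$ as an $\F$-linear map. Then $(\sigma^j+\sigma+1)(x)=0$, so the element $x$ is annihilated by the $\F$-linear operator $g(\sigma)$, where $g=t^j+t+1$.

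By definition of $m$, $g(t)$ divides $t^m-1$ in $\F[t]$; write $t^m-1=h(t)g(t)$. Then
\[
\sigma^m(x)-x=h(\sigma)\,g(\sigma)(x)=0 .
\]
This holds because polynomials in the single commuting operator $\sigma$ multiply like polynomials. Hence $x^{2^m}=\sigma^m(x)=x$ in $R$, as required.

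The main obstacle is being careful that $\sigma$ is genuinely $\F$-linear on $R$, so that $g(\sigma)$ makes sense and the factorization $t^m-1=hg$ transfers to operators. This is exactly where characteristic~$2$ and the freshman's dream enter. One can also avoid operator language: the sequence $z_i$ obeys the same linear recurrence as $y_n$ in Step~1 of Theorem~\ref{thm:C}, but with the lags reflected (reversed polynomial $t^j+t+1$ versus $t^j+t^{j-1}+1$, which have the same order). By Step~2 of that proof, every sequence satisfying this recurrence, including one with coefficients in the $\F$-vector space $R$, is periodic with period dividing $m$. Thus $z_m=z_0$, which again gives $x^{2^m}=x$.
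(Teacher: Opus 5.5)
Your proof is correct and is essentially the paper's argument: the same relation $x^{2^j}=x^2+x$ in $R=\F[x]/(f)$, the same observation that the additive operator $g(\sigma)$ with $g=t^j+t+1$ kills $x$, and the same factorization $t^m-1=h(t)g(t)$ transferred to operators to get $x^{2^m}=x$. Your closing alternative, via $R$-valued linear recurring sequences whose companion matrix satisfies $A^m=I$, is a valid repackaging of the same idea.
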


\begin{proof}
Let $q=2^j$, $f=x^{q-1}+x+1$, and work in the ring $R=\F[x]/(f)$. The
squaring map $\sigma(r)=r^2$ is additive on $R$. For a polynomial
$h(s)=\sum c_is^i\in\F[s]$, define the map $h(\sigma)(r)=\sum c_i r^{2^i}$.
Because $\sigma$ is additive, $(gh)(\sigma)=g(\sigma)\circ h(\sigma)$. In $R$
we have $x^q=x\cdot x^{q-1}=x(x+1)=x^2+x$, that is, $g(\sigma)(x)=0$ for
$g(s)=s^j+s+1$. Let $M=N_C(j)=\ord g$ and write $s^M+1=h(s)g(s)$. Then
$x^{2^M}+x=h(\sigma)\bigl(g(\sigma)(x)\bigr)=0$. So $x^{2^M-1}=1$ in $R$, and
therefore $\ord f$ divides $2^M-1$.
\end{proof}

What is missing for the conjecture is to show that $x$ does not have a
\emph{smaller} order in $R$.

\section{Recurrence (B): a permutation with an orderly corner}

Some data:
\[
\begin{array}{c|ccccccccc}
k&2&3&4&5&6&7&8&9&10\\ \hline
N_B(k)&8&22&45&138&415&916&3998&13142&38763
\end{array}
\]
and $\mu_B(k)=0$ for all $k\ge2$. The reason is easy.

\begin{proposition}\label{prop:Bperm}
For $k\ge2$ the map $T(w_0,\dots,w_{k-1})=(w_1,\dots,w_{k-1},\,w_0+w_{k-1}^2)$
is a bijection of $(\Z/3)^k$. Hence every orbit is a cycle and
$\mu_B(k)=0$. Its only fixed point is $0$.
\end{proposition}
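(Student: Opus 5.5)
We will prove bijectivity by writing down the inverse map explicitly. Given an output $(u_0,\dots,u_{k-1})=T(w_0,\dots,w_{k-1})$, we have $u_i=w_{i+1}$ for $0\le i\le k-2$ and $u_{k-1}=w_0+w_{k-1}^2$. Since $k\ge 2$, the coordinate $w_{k-1}$ equals $u_{k-2}$, which is visible in the output. So we can recover
\[
w_0=u_{k-1}-u_{k-2}^2,\qquad w_{i}=u_{i-1}\ (1\le i\le k-1).
\]
Thus $S(u_0,\dots,u_{k-1})=(u_{k-1}-u_{k-2}^2,\,u_0,\dots,u_{k-2})$ satisfies $S\circ T=\mathrm{id}$. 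We will also check $T\circ S=\mathrm{id}$; since $(\Z/3)^k$ is finite, one side suffices anyway. The only place the hypothesis $k\ge 2$ enters is that the squared coordinate $w_{k-1}$ survives in the shifted block. For $k=1$ the map $w\mapsto w+w^2$ is not injective, since it sends $0\mapsto 0$ and $2\mapsto 0$, and the plan would fail.

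Next, we relate this to the sequence. The block $v_n=(x_{n-k+1},\dots,x_n)$ satisfies $v_{n+1}=T(v_n)$ by rule (B), with $w_0=x_{n-k+1}$ and $w_{k-1}=x_n$. A bijection of a finite set decomposes into disjoint cycles. Hence the orbit of the initial block $v_k=(0,\dots,0,1)$ returns to $v_k$ itself, not merely to some later block. This means the sequence $(x_n)_{n\ge1}$ is purely periodic, i.e.\ $x_{n+N}=x_n$ for all $n\ge1$, so $\mu_B(k)=0$. We need a short remark here: periodicity of the blocks from $v_k$ on gives $x_{n+N}=x_n$ for every $n\ge 1$, because $v_k$ already contains $x_1,\dots,x_k$.

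Finally, for the fixed points, suppose $T(w)=w$. Comparing coordinates gives $w_0=w_1=\dots=w_{k-1}=:c$, and the last coordinate gives $c=c+c^2$. Hence $c^2=0$, so $c=0$ by Lemma~\ref{lem:squares} or by direct inspection. Conversely, $T(0)=0$.

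There is no real obstacle in this argument. The only step needing care is the inverse formula, namely noticing that $w_{k-1}$ is still present in the image when $k\ge2$.
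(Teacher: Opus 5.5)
Your proof is correct and matches the paper's: recover $w_0$ as the last entry minus $w_{k-1}^2$ (with $w_{k-1}$ still visible in the image because $k\ge 2$), use finiteness to get a bijection and hence pure periodicity, and note that a fixed point must be a constant $c$ with $c=c+c^2$, forcing $c=0$. Your explicit inverse $S$, the $k=1$ counterexample, and the remark that the block $v_k$ already contains $x_1,\dots,x_k$ spell out steps the paper leaves implicit.
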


\begin{proof}
From $T(w)$ we can recover $w_0=(\text{last entry})-w_{k-1}^2$, so $T$ is
injective, and hence bijective on a finite set. A bijection of a finite set
moves every point around a cycle. If $T(w)=w$, then $w$ is constant, say
equal to $c$, with $c=c+c^2$, so $c=0$.
\end{proof}

Thus $N_B(k)$ is the length of the cycle of the permutation $T$ through
$(0,\dots,0,1)$. Computer decompositions for $k\le 20$ show two kinds of
cycles: thousands of short cycles of length $4k$, and a handful of huge ones.
The short ones can be described exactly.

\begin{theorem}\label{thm:B}
Let $k\ge2$. The number of states fixed by $T^{4k}$ is
\[
1+4\!\!\sum_{e\equiv 3-k \ (\mathrm{mod}\ 4)}\!\binom{k}{e}
\;=\;1+2^k+2^{k/2+1}\cos\frac{3\pi(k-2)}{4}.
\]
\end{theorem}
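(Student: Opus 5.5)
The plan is to translate ``$w$ is fixed by $T^{4k}$'' into a statement about a bi-infinite sequence $(x_n)$ with $x_{n+4k}=x_n$ that satisfies rule (B), in the form $x_n=x_{n-k}+\ind{x_{n-1}\neq 0}$, for all $n$. By Proposition~\ref{prop:Bperm}, $T$ is a bijection, so each state extends uniquely to such a two-sided sequence. Hence the fixed points of $T^{4k}$ correspond exactly to the $4k$-periodic solutions.

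Next I would split the indices mod $4k$ into the $k$ residue classes mod $k$. Each class $\{j,j+k,j+2k,j+3k\}$ has four entries. Telescoping the rule four times around a class gives
\[
0=\sum_{t=1}^{4}\ind{x_{j+tk-1}\neq 0}\pmod 3 .
\]
So the ``previous'' class, consisting of the indices $j-1+tk$, has either $0$ or $3$ nonzero entries; it cannot have $4$.

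Suppose some class is entirely zero. Then its successor class receives no increments, so it is constant. A nonzero constant would give $4$ nonzero entries, which is not allowed, so the successor is zero as well. Propagating around all classes shows that the sequence is identically $0$, which contributes the term $1$.

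In every other case, each class has exactly one zero entry. Its predecessor class then feeds it three increments of $+1$ and one increment of $0$. Reading the class cyclically, starting just after its zero increment, its values are $a,a+1,a+2,a$. The requirement of exactly one zero among these four values forces $a\in\{1,2\}$, giving two choices per class. The position of the zero increment in class $j$ is dictated by where the zero sits in class $j-1$. The position of class $j$'s own zero is then that position shifted by an amount that depends only on $a_j$: it is the spot holding $a+1$ if $a=2$, and $a+2$ if $a=1$. Thus the two choices of $a_j$ give shifts that differ by exactly $1$ mod $4$.

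Going once around the $k$ classes closes a cycle mod $4$. The step from class $k-1$ to class $0$ carries an extra offset of one block, because $x_{n-1}$ for $n\equiv 0$ lies in the previous copy of the block. The closure condition then reads $e\equiv c_k\pmod 4$, where $e$ is the number of classes with $a_j=2$ and $c_k$ is a constant. I expect $c_k=3-k$, as in the statement. The zero position in class $0$ is free, which gives a factor $4$, and the remaining choices are the $\binom{k}{e}$ ways to pick the classes with $a_j=2$. This yields $1+4\sum_{e\equiv 3-k}\binom{k}{e}$.

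The main obstacle is the careful bookkeeping of the offsets, namely the base shift, the effect of $a$, and the wrap-around shift, so that the constant comes out as exactly $3-k$ mod $4$. I would pin it down by checking small $k$ against the data: for $k=2$ the count should be $1+4\binom20=5$. Finally, the closed form follows from the roots-of-unity filter
\[
\sum_{e\equiv r\ (4)}\binom{k}{e}=\tfrac14\sum_{m=0}^{3} i^{-mr}(1+i^m)^k .
\]
The $m=0$ term gives $2^k$ and the $m=2$ term vanishes. The $m=1,3$ terms combine, using $(1\pm i)^k=2^{k/2}e^{\pm i\pi k/4}$, into $2^{k/2+1}\cos\bigl(\pi(k-2r)/4\bigr)$. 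Substituting $r=3-k$ and multiplying by $4$ gives $2^{k/2+1}\cos\frac{\pi(3k-6)}{4}=2^{k/2+1}\cos\frac{3\pi(k-2)}{4}$, as claimed.
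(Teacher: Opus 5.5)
\textbf{Gap: the closure constant is never derived.} Up to the closure condition your argument follows the paper's proof: classes mod $k$, the $0$-or-$3$ count, propagation of an all-zero class, the pattern $a,a+1,a+2,a$ with $a\in\{1,2\}$, row shifts of $1$ or $2$, and one extra row at the wrap-around. Your roots-of-unity evaluation is also correct. What is missing is the step that produces the residue class in the statement. You take $c_k=3-k$ from the statement and propose to confirm it on data. That does not work, for three reasons.
\begin{enumerate}
\item A check at finitely many $k$ cannot establish a $k$-dependent congruence.
\item Your check is itself wrong. For $k=2$ the sum runs over $e\equiv1\pmod 4$, so the count is $1+4\binom21=9$. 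That is all of $(\Z/3)^2$, consistent with $N_B(2)=8$. Your value $5$ would point to the wrong class.
\item With $e$ as you define it (classes with $a_j=2$), the guess $e\equiv3-k$ is false unless $k\equiv2\pmod 4$. For $k=3$ the nonzero solutions are the four shifts of the $4$-periodic sequence $0,2,2,1$. It has $a_j=2$ in all three classes, so $e=3\not\equiv 0=3-k$.
\end{enumerate}

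\textbf{How to close it.} Your own ingredients finish the job in two lines. Let $u_j\in\Z/4$ be the row of the zero in class $j$.
\begin{enumerate}
\item For $1\le j\le k-1$: $u_j=u_{j-1}+1$ if $a_j=2$, and $u_j=u_{j-1}+2$ if $a_j=1$.
\item At the wrap-around: $u_0=u_{k-1}+1+1$ or $u_{k-1}+1+2$, according as $a_0=2$ or $a_0=1$.
\end{enumerate}
Going once around gives $1+e+2(k-e)\equiv0\pmod 4$, that is, $e\equiv2k+1\pmod 4$. Equivalently, the number $k-e$ of classes with $a_j=1$ satisfies $k-e\equiv3-k\pmod 4$. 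The symmetry $\binom ke=\binom k{k-e}$ then turns your sum into the stated one.

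The paper avoids the separate wrap-around term by measuring positions mod $4k$ rather than rows. Consecutive zeros then differ by $k+1$ or $2k+1$ uniformly, wrap-around included. The total $k(k+1)+ek$ is $\equiv0\pmod{4k}$ exactly when $e\equiv3-k$, where $e$ counts the $2k+1$ steps, i.e.\ the classes with $a_j=1$.

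You should also state, as the paper does, that every admissible choice of starting zero and of the $a_j$ actually yields a solution. The four increments in each class check directly. Without this you only have an upper bound, not an exact count.
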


\begin{proof}
Since $T$ is a bijection, states fixed by $T^{4k}$ correspond to two-sided
sequences $(x_n)_{n\in\Z}$ of period dividing $4k$ satisfying
$x_n=x_{n-k}+b_{n-1}$, where $b_n=\ind{x_n\neq0}$. Arrange one period in a
$4\times k$ array, with $x_{j+tk}$ in row $t\in\Z/4$ and column
$j\in\{0,\dots,k-1\}$. Going down a column, the four increments
$x_{j+tk}-x_{j+(t-1)k}=b_{j+tk-1}$ are the $b$-values of the previous column
(for $j=0$, the last column shifted by one row). They add up to $0$ mod $3$.

\emph{Each column has $1$ or $4$ zeros.} The number of nonzero entries in a
column is $0$ or $3$, because it is a multiple of $3$ and at most $4$. If some
column is all zeros, the next column has all increments $0$, so it is
constant. It must then also be all zeros, since a constant nonzero column has
no zeros at all. Repeating this around the array gives $x\equiv0$, the fixed
point.

\emph{Where the single zeros sit.} Suppose the zero of one column is in row
$u$. The next column then reads $a,a,a+1,a+2$ in rows $u-1,u,u+1,u+2$. The
value $a=0$ would give two zeros, $a=1$ puts the zero in row $u+2$, and $a=2$
puts it in row $u+1$. In terms of positions mod $4k$, consecutive zeros (one
per column) differ by $k+1$ or $2k+1$, and the wrap-around from the last
column to the first behaves the same way. Conversely, any such choice of zero
positions determines the columns and gives a solution.

\emph{Counting.} Choose the zero in column $0$ ($4$ ways), then a sequence of
$k$ steps, each $k+1$ or $2k+1$, that returns to the start. If $e$ steps
equal $2k+1$, the total is $k(k+1)+ek$, and this is $\equiv0\pmod{4k}$
exactly when $e\equiv 3-k\pmod 4$. The closed form follows from the standard
roots-of-unity filter with $(1+i)^k$.
\end{proof}

For example, at $k=2$ every nonzero state is counted ($8$ of them), matching
$N_B(2)=8$. At $k=5$ the count is $40$, which is two $20$-cycles.

\begin{corollary}
For $k\ge3$, $N_B(k)$ does not divide $4k$.
\end{corollary}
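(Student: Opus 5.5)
The plan is to argue by contradiction, using Theorem~\ref{thm:B} and its proof. Suppose $N_B(k)\mid 4k$. Then the starting state $s=(0,\dots,0,1)$ is fixed by $T^{4k}$. By the proof of Theorem~\ref{thm:B}, the two-sided periodic sequence through $s$ is then one of the array solutions described there. In particular, in every block of $k$ consecutive positions corresponding to a column, that is, among positions $j, j+k, j+2k, j+3k$ for fixed $j$, exactly one entry is zero. The point is that the initial block $x_1=\dots=x_{k-1}=0$ contains $k-1$ zeros in consecutive positions. I will show this is incompatible with the zero pattern of a nonzero $T^{4k}$-fixed state once $k\ge3$.

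The key structural fact from the proof is that zeros occur exactly once per column. Consecutive zeros, in column order, sit at positions differing by $k+1$ or $2k+1$ modulo $4k$. So the zero set is $\{z_0,z_1,\dots,z_{k-1}\}$ with $z_{j+1}-z_j\in\{k+1,2k+1\}$, and it has exactly $k$ elements. I then translate "positions $1,\dots,k-1$ are all zero" into this language. These $k-1$ positions lie in distinct columns, since they are consecutive and fewer than $k$. Hence they are the zeros of columns $1,\dots,k-1$, all in the same row. But the zeros of adjacent columns $j$ and $j+1$ (for $1\le j\le k-2$) differ by $k+1$ or $2k+1$, never by $1$. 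Positions $j$ and $j+1$ differ by $1$, so they cannot both be zeros as soon as there is at least one adjacent pair. This happens precisely when $k-1\ge2$, i.e.\ $k\ge3$. This gives the contradiction.

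The step needing care is making the indexing consistent. I have to pin down that positions $j$ and $j+1$ (with $1\le j\le k-2$) are in adjacent columns $j$ and $j+1$ of the \emph{same} row, rather than involving the wrap-around column $0$. I also have to check that "zero of column $j+1$ is at the zero of column $j$ plus $k+1$ or $2k+1$ mod $4k$" is exactly the statement "row shifts by $+1$ or $+2$" from the proof. That proof shows that if column $j$ has its zero in row $u$, then column $j+1$ has its zero in row $u+1$ or $u+2$, never in row $u$. This is the cleanest form to quote, and it directly forbids two zeros in the same row of adjacent columns.

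Finally, I need to confirm that $s$ is not the fixed point $0$, which is clear since $x_k=1$. I also need to note that a nonzero state fixed by $T^{4k}$ falls under the "single zero per column" case. This is exactly the dichotomy proved there, because an all-zero column forces $x\equiv0$. With $k=2$ excluded (there $k-1=1$ and no adjacent pair exists, consistent with $N_B(2)=8$), the corollary follows.
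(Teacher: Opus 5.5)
Your proof is correct and is essentially the paper's argument. A nonzero $T^{4k}$-fixed state has exactly one zero per column, and that zero moves down one or two rows from each column to the next (equivalently, consecutive zeros differ by $k+1$ or $2k+1$ modulo $4k$), so no two zeros are adjacent, whereas $x_1=x_2=0$ once $k\ge3$. Your restriction to positions $1,\dots,k-1$ in row $0$ simply avoids the wrap-around column, which the paper handles by working with positions modulo $4k$.
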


\begin{proof}
In the theorem's solutions no two zeros are adjacent, because neither $k+1$
nor $2k+1$ is $\equiv\pm1\pmod{4k}$. The starting block $0,0,\dots,0,1$ has
adjacent zeros when $k\ge3$.
\end{proof}

So the orderly corner, about $2^k$ of the $3^k$ states, misses our starting
point. The remaining states resemble a random permutation: for
$k\le 20$ the largest cycle averages about $0.63\cdot 3^k$, close to the
Golomb-Dickman constant $0.6243\ldots$ predicted for random permutations 
\cite{F1,F2}.

\begin{conjecture}
As $k\to\infty$, the ratios $N_B(k)/3^k$ behave like independent uniform
random numbers in $(0,1)$.
\end{conjecture}

\section{Recurrence (A): looks random, but not entirely}

Some data:
\[
\begin{array}{c|ccccccccccc}
k&2&3&4&5&6&7&8&9&10&11&12\\ \hline
N_A(k)&4&4&9&19&4&4&22&36&4&4&45
\end{array}
\]
The value $4$ keeps reappearing. Here the pre-periods are huge: for example
$\mu_A(403)\approx1.0\times10^{11}$ and $\mu_A(487)\approx4.8\times10^{12}$.
Throughout this section let $b_n=\ind{x_n\neq0}$ and
$d_n=x_n-x_{n-1}=b_{n-k}$ for $n\ge k+1$.

\begin{proposition}\label{prop:A4}
For $k\ge2$ we have $N_A(k)\ge4$.
\end{proposition}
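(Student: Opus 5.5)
The plan is to look only at the eventually periodic part of the sequence and to use a counting argument on the increments $d_n=x_n-x_{n-1}=b_{n-k}$, in the same spirit as the column argument in the proof of Theorem~\ref{thm:B}. Let $N=N_A(k)$ and $\mu=\mu_A(k)$. Pick a window of $N$ consecutive indices $n$ with $n-k>\mu$ (so also $n-1>\mu$). Summing the increments over this window telescopes to $x_{n_0+N}-x_{n_0}=0$. Hence
\[
\sum_{\text{window}} b_{n-k}\equiv 0\pmod 3 .
\]
Since $n\mapsto n-k$ also runs over $N$ consecutive periodic indices, the number $Z$ of nonzero terms among any $N$ consecutive periodic terms is a multiple of $3$, and $0\le Z\le N$.

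\emph{Case $Z=0$.} Then $x_n=0$ for all large $n$. I would rule this out by propagating zeros backwards. If $x_n=x_{n-1}=0$ with $n\ge k+1$, then $b_{n-k}=d_n=0$, so $x_{n-k}=0$. Starting from a tail of zeros, this forces zeros at all indices $\ge 1$ by downward induction: each step extends the block of zeros downward, as long as it has length at least $2$, which it does since it is infinite. This contradicts $x_k=1$.

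\emph{Remaining case.} Otherwise $Z\ge 3$, so $N\ge 3$, and it remains to exclude $N=3$. If $N=3$ then $Z=3$, so every periodic term is nonzero. Then $b_{n-k}=1$ for all large $n$, and $x_n=x_{n-1}+1$ eventually runs through $0,1,2$. In particular it takes the value $0$, which contradicts every periodic term being nonzero. Hence $N\ge 4$.

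The only step needing any care is the backward propagation in the case $Z=0$. There I need to check the index bookkeeping ($n\ge k+1$ so that the rule applies) and that the induction really reaches index $k$. I expect this to be routine, since the block of known zeros only grows downward.
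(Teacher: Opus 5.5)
Your proof is correct and matches the paper's argument essentially step for step: the telescoping sum over one period gives $3\mid Z$, an eventually-zero tail is excluded by pushing zeros back $k$ places, and $N=3$ would force steps of $1$ through all three residues, including $0$. The only difference is cosmetic: you treat the $Z=0$ case as a downward induction in which the start $M$ of the zero tail moves to $M+1-k<M$ (this is where $k\ge2$ is used), while the paper takes $M$ minimal and gets the contradiction in one step.
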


\begin{proof}
First, $x$ is not eventually $0$. Otherwise, let $M$ be least with $x_n=0$ for
all $n\ge M$. Then $M>k$, and for $n\ge M+1$ we get $b_{n-k}=d_n=0$. That
means $x_n=0$ for all $n\ge M+1-k<M$, contradicting the choice of $M$.

Now take a window of one full period $n=m+1,\dots,m+N$, with $m$ large. The
increments $d_n$ add up to $x_{m+N}-x_m=0$ mod $3$. But their sum is the
number $Z$ of nonzero terms in another full period, the one shifted by $k$.
So $3\mid Z$, and $Z>0$, which gives $N\ge Z\ge 3$. If $N=3$, then every term
is nonzero, so every $d_n=1$, and the window contains all three residues,
including $0$. That is a contradiction.
\end{proof}

\begin{proposition}\label{prop:Ashape}
If $N_A(k)=4$, the eventual cycle is $(0,1,1,2)$ and $k\equiv2\pmod4$, or it
is $(0,1,2,2)$ and $k\equiv3\pmod4$. Conversely, for each such $k$ the
corresponding $4$-periodic sequence satisfies the rule.
\end{proposition}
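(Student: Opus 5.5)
Our plan is to pin down the cycle's shape by counting, reusing the argument of Proposition~\ref{prop:A4}. Suppose $N_A(k)=4$, and take a window of one period far beyond the pre-period. As in that proof, the increments $d_n$ over the window sum to $0$ mod $3$. This sum equals the number $Z$ of nonzero terms in a (shifted) period, so $3\mid Z$ and $0<Z\le 4$. Hence $Z=3$: each period contains exactly one $0$. After a cyclic shift we may write the cycle as $(0,a,b,c)$ with $a,b,c\in\{1,2\}$.

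Next we use that every increment is a $b$-value, hence lies in $\{0,1\}$. The step $0\to a$ forces $a=1$. The step $c\to 0$ has increment $-c\in\{0,1\}$ with $c\neq0$, which forces $c=2$. The middle entry $b\in\{1,2\}$ is then free, giving exactly the two cycles $(0,1,1,2)$, with increments $1,0,1,1$, and $(0,1,2,2)$, with increments $1,1,0,1$. In both, the increment pattern has exactly one $0$, consistent with $Z=3$.

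Then we locate $k$ mod $4$. Index the cycle so that $x_n=0$ exactly when $n\equiv0\pmod 4$. Then $b_n=0$ exactly when $n\equiv 0$, so $d_n=b_{n-k}=0$ exactly when $n\equiv k\pmod 4$. For $(0,1,1,2)$ the zero increment occurs at $n\equiv2$, so $k\equiv2$. For $(0,1,2,2)$ it occurs at $n\equiv3$, so $k\equiv3$.

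The converse should be a direct check. Take the $4$-periodic sequence with the stated pattern and the matching $k$ mod $4$. The relation $x_n=x_{n-1}+b_{n-k}$ depends only on $n$ mod $4$, since $k$ is fixed mod $4$. So it suffices to verify the four residues, which is exactly the computation above read backwards. We expect no real obstacle here. The only care needed is bookkeeping the indices consistently, so that the zero of $d$ sits $k$ places after the zero of $b$.
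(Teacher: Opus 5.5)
Your proposal is correct and follows the same route as the paper. The counting argument of Proposition~\ref{prop:A4} gives exactly one zero per period, the increments $d_n=b_{n-k}\in\{0,1\}$ force the block $0,1,*,2$, matching the unique vanishing increment to the shifted zero of $b$ gives $k\equiv 2$ or $3\pmod 4$, and the converse is a direct check of the four residues.
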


\begin{proof}
By the previous proof, each period of $4$ contains exactly one zero, say at
residue $r$. A block $0,a,b,c$ has increments $a,\,b-a,\,c-b,\,-c\in\{0,1\}$,
so $a=1$, $c=2$, and $b\in\{1,2\}$. The single increment equal to $0$ sits at
residue $r+2$ (if $b=1$) or $r+3$ (if $b=2$). Since $d_n=b_{n-k}$ vanishes
only when $n-k\equiv r$, we get $k\equiv2$ or $k\equiv3\pmod 4$. Checking the
converse is direct.
\end{proof}

The condition $k\equiv2,3\pmod4$ is necessary but not sufficient:
$N_A(18)=213$ and $N_A(23)=412$. There is, however, an exact test that you
can employ on a computer.\footnote[2]{The words ``gap'' and ``run'' within 
Proposition~\ref{prop:Ashape} and its proof are defined in the first Addendum.}

\begin{proposition}[Capture criterion]
Let $k\equiv2$ or $3\pmod4$ and $G=\lceil (k-1)/4\rceil$. Then $N_A(k)=4$ if
and only if, beyond index $k$, the zeros of $x$ at some point form $G$
consecutive gaps of exactly $4$.
\end{proposition}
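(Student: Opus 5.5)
The forward direction is immediate. If $N_A(k)=4$, then by Proposition~\ref{prop:Ashape} the sequence is eventually the cycle $(0,1,1,2)$ or $(0,1,2,2)$. From some point on, the zeros of $x$ therefore sit exactly $4$ apart. In particular, beyond index $k$ we eventually find $G$ (indeed arbitrarily many) consecutive gaps of exactly $4$. The work is in the converse, and there the plan is a ``shadowing'' argument: compare $x$ with the explicit $4$-periodic solution $p$ supplied by the converse half of Proposition~\ref{prop:Ashape}.

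\emph{Step 1 (local shape of a window).} Suppose the zeros of $x$ beyond index $k$ include $z,z+4,\dots,z+4G$, with no zeros in between, and $z>k$. On each block $x_{z+4i},\dots,x_{z+4i+4}$ the increments $d_n=b_{n-k}$ lie in $\{0,1\}$ and sum to $0$ mod $3$. Exactly as in the proof of Proposition~\ref{prop:Ashape}, each block reads $0,1,\beta_i,2,0$ with $\beta_i\in\{1,2\}$. In particular $b$ is $4$-periodic on $[z,z+4G]$, vanishing exactly at the residue class of $z$. The only undetermined data are the middle values $\beta_i$.

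\emph{Step 2 (pinning down the middles).} For $n\in[z+k,\,z+4G]$ we have $n-k\in[z,z+4G-k]$, so $d_n=b_{n-k}$ is known: it is $0$ exactly when $n\equiv z+k\pmod 4$. With $G=\lceil (k-1)/4\rceil$ one has $4G\ge k+1$. Hence this range contains enough indices to force, in every block meeting $[z+k,z+4G]$, that the zero increment sits at offset $k \bmod 4$. This gives $\beta_i=1$ when $k\equiv2$ and $\beta_i=2$ when $k\equiv3\pmod4$. Let $p$ be the $4$-periodic sequence of that shape with $p_z=0$. By Proposition~\ref{prop:Ashape}, $p$ satisfies rule (A), and $x$ agrees with $p$ on the determined blocks.

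\emph{Step 3 (induction).} We show $x_m=p_m$ for all $m\ge z+4G$ by induction. Since $m+1-k\ge z+4G+1-k\ge z$, we get
\[
x_{m+1}=x_m+\ind{x_{m+1-k}\neq0}=p_m+\ind{p_{m+1-k}\neq0}=p_{m+1},
\]
provided $x$ and $p$ agree at $m+1-k$. That holds either by the induction hypothesis or because $m+1-k$ lies in the verified window. Hence $x$ is eventually $p$, and $N_A(k)=4$ (it is at least $4$ by Proposition~\ref{prop:A4}).

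The main obstacle is the bookkeeping in Step 2. The induction needs agreement with $p$ on the whole stretch $[z+4G+1-k,\,z+4G]$ of length $k$. Step 2, however, only directly certifies the blocks whose offset-$2$ or offset-$3$ increment index is at least $z+k$. So I must check that these certified blocks cover the last $k$ entries of the window, and this is exactly where the value $G=\lceil (k-1)/4\rceil$ and the restriction $k\equiv2,3\pmod4$ enter. I would first check this against the extreme cases $k=6$ and $k=7$ (where $4G=k+2$ and $4G=k+1$), then do the general residue computation. If the covering fails by a block, I would instead run the induction starting from the first certified block and use the freedom $m+1-k\ge z$ more carefully.
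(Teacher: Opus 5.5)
Your forward direction and Step~1 are correct, but the converse does not close as you set it up. The obstacle you flag is real, not just bookkeeping. Step~2 certifies only the middle entry of the \emph{last} block: the range $[z+k,z+4G]$ has only $4G-k+1\in\{2,3\}$ elements, all inside $[z+4G-4,z+4G]$. For an earlier block $i\le G-2$, the middle is fixed by $d_{z+4i+2}=b_{z+4i+2-k}$ with $z+4i+2-k<z$. That is data from before the run, so the window does not determine it. The stretch $[z+4G+1-k,z+4G]$ that your induction needs contains such undetermined middles as soon as $k\ge 7$; the covering happens to work only for $k=6$. For example, for $k=7$ you get $x_{z+2}=1+b_{z-5}$, while $p_{z+2}=2$. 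Worse, the conjecture $z-\mu=6$ in the second Addendum (observed for $k=7,10,11,14,\dots$) says exactly that the middle of the second-to-last block of the capturing run differs from the cycle. So agreement of \emph{values} $x=p$ on the last $k$ entries is generally false. Neither your planned covering check nor the fallback of restarting at the certified block can work with the hypothesis ``$x$ and $p$ agree at $m+1-k$.''

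The missing observation is that your displayed induction step never uses the value $x_{m+1-k}$, only $\ind{x_{m+1-k}\neq 0}$. So take as invariant: $x_m=p_m$, and $x$ and $p$ have the same zero pattern on $[m+1-k,m]$. Step~1 already gives the zero pattern of $x$ on all of $[z,z+4G]$: zeros exactly at $z,z+4,\dots,z+4G$, whatever the $\beta_i$ are. This is the zero pattern of $p$. The interval has length $4G+1\ge k$, and $x_{z+4G}=0=p_{z+4G}$. Hence the invariant holds at $m=z+4G$, your one-line computation propagates it, and Step~2 can be discarded. This is precisely the paper's proof: the current value together with the last $k$ values of $b$ determines everything that follows, and at the end of the run these data coincide with those of the $4$-periodic solution. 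Note that this only needs $4G+1\ge k$, which is exactly what $G=\lceil (k-1)/4\rceil$ provides.
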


\begin{proof}
The ``only if'' part is clear. For ``if'', suppose the run of zeros ends at
$z$. As in the previous proof, every block between consecutive zeros reads
$0,1,*,2$. So $b$ is known on $[z-4G,z]$, an interval of length
$4G+1\ge k$, and $x_z=0$. The data $(x_z;b_{z-k+1},\dots,b_z)$ determine
everything that follows, and they coincide with the data of the $4$-periodic
solution of Proposition~\ref{prop:Ashape} with a zero at $z$. So $x$ follows
that solution from $z$ on.
\end{proof}

\begin{conjecture}\label{conj:A}
$N_A(4m^2+3)=4$ for all $m\ge0$, and $N_A(3^{2j+1})=4$ for all $j\ge0$.
\end{conjecture}

The evidence is modest: the first family has been checked for $m\le 11$ (up
to $k=487$), the second only for $k=3,27,243$. Computer experiments suggest a
heuristic ``race'' picture. Long runs of zeros spaced $4$ apart appear,
copy themselves about $k$ steps later, and die out at random. Capture happens
when one run happens to reach length $G$, unless the sequence first falls
into some other long cycle. Roughly three quarters of the $k\equiv2,3\pmod4$
up to $411$ give $N_A(k)=4$. Statistics gathered before capture do not seem
to distinguish the two outcomes. This makes us suspect that no simple
sufficient condition on $k$ exists. But that is a guess, not a theorem!

\section{Summary}

\begin{center}
\renewcommand{\arraystretch}{1.25}
\begin{tabular}{|p{2.1cm}|p{3.4cm}|p{4.6cm}|p{3.6cm}|}\hline
Rule & Character & Proved & Conjectured\\\hline
(C) & linear over $\F$ & $\mu=1$; $N=\ord(x^k+x+1)$; divisibility results; Bartholdi's theorems
& $N(2^j-1)=2^{N(j)}-1$\\\hline
(B) & permutation of $(\Z/3)^k$ & $\mu=0$; exact count of states of period dividing $4k$; $N\nmid 4k$
& $N/3^k$ looks uniformly random\\\hline
(A) & random-looking & $N\ge4$; shape of $4$-cycles; $N=4\Rightarrow k\equiv2,3\ (4)$; capture criterion
& $N(4m^2+3)=4$, $N(3^{2j+1})=4$\\\hline
\end{tabular}
\end{center}

\section{Addendum: short runs of zeros in recurrence (A)}

	\subsection*{The setting}
	
	As in the main paper, fix $k\ge 2$ and let $x_1,x_2,\dots\in\Z/3$ be given by
	\[
	x_1=\dots=x_{k-1}=0,\qquad x_k=1,\qquad
	x_n = x_{n-1}+x_{n-k}^2 \quad (n\ge k+1).
	\]
	Write $b_n=\ind{x_n\neq 0}$, which is $1$ if $x_n\neq 0$ and $0$ if $x_n=0$.
	Since $c^2=\ind{c\neq 0}$ in $\Z/3$, the rule says that the \emph{step}
	\[
	d_n = x_n - x_{n-1}
	\]
	satisfies
	\begin{equation}\label{eq:step}
	d_n = b_{n-k}\in\{0,1\}\qquad\text{for every } n\ge k+1 .
	\end{equation}
	So once we are past index $k$, the sequence can only stay put or go up by
	$1$ (mod $3$) at each step. Everything below follows from this observation.
	
	In the main paper we saw that when $k\equiv 2$ or $3 \pmod 4$, the zeros of
	$x$ can line up $4$ apart forever, which gives a $4$-cycle. Here we show the
	opposite for the other two residue classes: for those $k$, zeros spaced $4$
	apart can only form \emph{short} runs.
	
	\subsection*{What we mean by a run}
	
	\begin{definition}\label{def:run}
		Let $r\ge k+1$ and $g\ge 1$ be integers. We say that $x$ has a
		\emph{run of zeros spaced exactly $4$ apart, with $g$ gaps, starting at $r$},
		if
		\begin{enumerate}
			\item $x_{r}=x_{r+4}=x_{r+8}=\dots=x_{r+4g}=0$, and
			\item $x_n\neq 0$ for every other $n$ with $r<n<r+4g$.
		\end{enumerate}
		In words: the $g+1$ positions $r,r+4,\dots,r+4g$ are zeros, there are no
		other zeros between $r$ and $r+4g$, and so the gap between each pair of
		consecutive zeros in this stretch is exactly $4$. The number $g$ counts the
		\emph{gaps}, not the zeros.
	\end{definition}
	
	For example, if the zeros of $x$ near some index $r\ge k+1$ sit at
	\[
	r,\quad r+4,\quad r+8,\quad r+11,
	\]
	then positions $r,r+4,r+8$ form a run with $g=2$ gaps. Position $r+11$ is
	not part of it, because the gap from $r+8$ to $r+11$ is $3$.
	
	We ask for $r\ge k+1$ so that \eqref{eq:step} holds for every step inside
	the run. (Every step $n$ we use satisfies $n>r\ge k+1$.) Nothing is
	assumed about what happens before $r$ or after $r+4g$, so the run need not
	be maximal.
	
	\subsection*{The result}
	
	\begin{theorem}\label{thm:main}
		Suppose $k\equiv 0$ or $k\equiv 1 \pmod 4$. Then every run of zeros spaced
		exactly $4$ apart (in the sense of Definition~\ref{def:run}) has at most
		$\lfloor (k-1)/4\rfloor$ gaps, that is,
		\[
		g\le \left\lfloor \frac{k-1}{4}\right\rfloor .
		\]
		Concretely, $g\le \frac k4-1$ if $k\equiv 0\pmod 4$, and
		$g\le \frac{k-1}{4}$ if $k\equiv 1\pmod 4$.
	\end{theorem}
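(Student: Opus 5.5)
The plan is to exploit \eqref{eq:step} twice: once to pin down the shape of the sequence inside the run, and once to connect the run to itself $k$ steps later. Suppose a run with $g$ gaps starts at $r\ge k+1$. I will show that if $4g\ge k$ then $k\equiv 2$ or $3\pmod 4$. For $k\equiv 0,1\pmod 4$ this forces $4g\le k-1$, that is, $g\le\lfloor (k-1)/4\rfloor$.

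\emph{Step 1: the shape of each block.} Fix $0\le i<g$ and look at the block $x_{r+4i},\dots,x_{r+4i+4}$, which reads $0,a,b,c,0$ with $a,b,c\neq0$. All four steps lie in $\{0,1\}$ by \eqref{eq:step}, and their sum is $0$ mod $3$. They cannot all be $0$, since $a\neq 0$. So exactly three of them equal $1$ and exactly one equals $0$.

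The first step is $a-0\in\{0,1\}$ with $a\ne0$, so $a=1$. The last step is $0-c\in\{0,1\}$ with $c\neq0$, so $c=2$. Hence the block is $0,1,b,2,0$ with $b\in\{1,2\}$, exactly as in the proof of Proposition~\ref{prop:Ashape}. Consequently, for $n\in(r,r+4g]$, the step $d_n$ equals $0$ only when $n\equiv r+2$ or $n\equiv r+3\pmod 4$. In particular $d_n=1$ whenever $n\equiv r\pmod 4$ or $n\equiv r+1\pmod 4$.

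\emph{Step 2: compare with the run shifted by $k$.} Since $x_r=0$, we have $b_r=0$. If $r+k\le r+4g$, i.e.\ $4g\ge k$, then $n=r+k$ lies in $(r,r+4g]$. Moreover $n\ge k+1$, so \eqref{eq:step} gives $d_{r+k}=b_r=0$. By Step 1 this forces $r+k\equiv r+2$ or $r+3\pmod 4$, that is, $k\equiv 2$ or $3\pmod 4$.

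So for $k\equiv0,1\pmod 4$ we must have $4g<k$, i.e.\ $g\le\lceil k/4\rceil-1=\lfloor (k-1)/4\rfloor$. This reads $k/4-1$ or $(k-1)/4$ in the two cases.

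\emph{Main obstacle.} The only delicate point is Step 1: verifying that every block is forced to be $0,1,*,2$, so that the zero step sits at offset $2$ or $3$ and never at offset $0$. One also has to make sure that all indices used satisfy the hypothesis of \eqref{eq:step}. This holds because every step index $n$ used satisfies $n>r\ge k+1$, and $r+k$ does as well. Once this is settled, the comparison in Step 2 is a one-line congruence.
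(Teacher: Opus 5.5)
Your proof is correct and follows essentially the same route as the paper. You first force each block to read $0,1,*,2,0$, so $d_n=1$ at the offsets $\equiv r$ and $\equiv r+1 \pmod 4$, and then you use the look-back $d_{r+k}=b_r=0$ whenever $4g\ge k$. Stating the result as the contrapositive ``$4g\ge k$ forces $k\equiv 2,3\pmod 4$'' instead of the paper's two-case contradiction is only a cosmetic difference.
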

	
	The proof uses one simple lemma about what a single block of a run must
	look like.
	
	\begin{lemma}\label{lem:block}
		Inside a run as in Definition~\ref{def:run}, every block of five
		consecutive terms from one zero to the next reads
		\[
		x_{m},\,x_{m+1},\,x_{m+2},\,x_{m+3},\,x_{m+4} \;=\; 0,\;1,\;*,\;2,\;0
		\qquad (m=r,r+4,\dots,r+4g-4),
		\]
		where $*$ is $1$ or $2$. Consequently, for $r<n\le r+4g$:
		\begin{itemize}
			\item if $n\equiv r\pmod 4$, then $d_n=0-2=1$ (the step that lands on a zero);
			\item if $n\equiv r+1\pmod 4$, then $d_n=1-0=1$ (the step that leaves a zero).
		\end{itemize}
	\end{lemma}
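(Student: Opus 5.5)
The plan is to work one block at a time, using only the step rule \eqref{eq:step} and the definition of a run. Fix a block $x_m,\dots,x_{m+4}$ with $m=r+4j$, $0\le j\le g-1$. By Definition~\ref{def:run} we have $x_m=x_{m+4}=0$ and $x_{m+1},x_{m+2},x_{m+3}\neq 0$. Every index $n$ with $m<n\le m+4$ satisfies $n>r\ge k+1$, so \eqref{eq:step} gives $d_n\in\{0,1\}$ for the four steps $d_{m+1},\dots,d_{m+4}$. Their sum is $x_{m+4}-x_m=0$ in $\Z/3$. As integers this sum lies in $\{0,1,2,3,4\}$, so it equals $0$ or $3$.

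Next I will rule out the sum $0$ and pin down the first and last entries. If the sum were $0$, all four steps would be $0$ and $x_{m+1}=x_m=0$, contradicting $x_{m+1}\neq0$. So exactly three of the four steps equal $1$ and one equals $0$. Since $x_{m+1}=x_m+d_{m+1}$ must be nonzero, $d_{m+1}=1$ and hence $x_{m+1}=1$. Likewise $x_{m+4}=x_{m+3}+d_{m+4}=0$ with $x_{m+3}\neq 0$ forces $d_{m+4}=1$, and then $x_{m+3}=-1=2$.

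The middle entry $x_{m+2}=1+d_{m+2}$ is therefore $1$ or $2$. The consistency check is that $d_{m+3}=x_{m+3}-x_{m+2}=2-x_{m+2}$ is $1$ or $0$ respectively. This is automatically compatible, so $*\in\{1,2\}$ and no further constraint appears. This gives the displayed shape $0,1,*,2,0$.

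For the two bullet points, I note that the indices $n$ with $r<n\le r+4g$ and $n\equiv r\pmod 4$ are exactly the $m+4$ above. Those with $n\equiv r+1\pmod 4$ are exactly the $m+1$ above. So $d_n=1$ in both cases, as computed.

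There is no real obstacle. The only delicate point is noting that the integer sum of the four $\{0,1\}$-valued steps must be $0$ or $3$, and then excluding $0$ by the nonvanishing of $x_{m+1}$. One must also confirm that all steps used have index at least $k+1$, which holds because $r\ge k+1$.
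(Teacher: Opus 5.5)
Your proof is correct and follows essentially the same route as the paper: the nonvanishing of $x_{m+1}$ and $x_{m+3}$ forces the first and last steps of each block to be $1$, which gives $x_{m+1}=1$ and $x_{m+3}=2$ and leaves the middle entry free in $\{1,2\}$. Your preliminary observation that the four steps sum to $0$ or $3$ is correct but unnecessary, since the endpoint arguments alone already determine the block's shape.
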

	
	\begin{proof}
		Write the block as $0,a,c,e,0$ with $a,c,e\neq 0$. Its four steps are
		$a,\;c-a,\;e-c,\;-e$, and by \eqref{eq:step} each of them is $0$ or $1$.
		\begin{itemize}
			\item The first step is $a\in\{0,1\}$, and $a\neq0$, so $a=1$.
			\item The last step is $-e\in\{0,1\}$, so $e\in\{0,2\}$. Since $e\neq0$,
			we get $e=2$.
		\end{itemize}
		The middle entry $c$ is $1$ or $2$, and either value gives steps in
		$\{0,1\}$. The two bullet points then read off the first and last steps.
	\end{proof}
	
	\begin{proof}[Proof of Theorem~\ref{thm:main}]
		Take a run with $g$ gaps starting at $r$. Suppose, aiming for a
		contradiction, that the run is long, in the sense that
		\[
		4g \ge k .
		\]
		Look at the step at position $n=r+k$. Because $1\le k\le 4g$, this position
		lies inside the run: $r<n\le r+4g$. By \eqref{eq:step},
		\[
		d_{r+k} = b_{r} = 0,
		\]
		because $x_r=0$. In words: the rule ``looks back'' exactly $k$ places, sees
		the zero at the start of the run, and therefore forces a step of $0$.
		
		Now use Lemma~\ref{lem:block} to compute the same step a second way.
		\begin{itemize}
			\item \textbf{Case $k\equiv 0\pmod 4$.} Then $n=r+k\equiv r\pmod 4$, so $n$
			is one of the zero positions $r+4,\dots,r+4g$. By the lemma, $d_n=1$.
			\item \textbf{Case $k\equiv 1\pmod 4$.} Then $n=r+k\equiv r+1\pmod 4$, so
			$n=r+4i+1$ for some $i\ge 0$. Since $n\le r+4g$, we have $4i+1\le 4g$, so
			$i\le g-1$, and $n$ is the second entry of a block. By the lemma, $d_n=1$.
		\end{itemize}
		Either way $d_n=1$, which contradicts $d_n=0$. So the run cannot be long,
		and therefore
		\[
		4g\le k-1,\qquad\text{that is,}\qquad g\le \frac{k-1}{4}.
		\]
		Since $g$ is an integer, $g\le\lfloor (k-1)/4\rfloor$. (When
		$k\equiv 0\pmod 4$, the largest multiple of $4$ that is at most $k-1$ is
		$k-4$, so $g\le k/4-1$. When $k\equiv 1\pmod 4$, we get exactly
		$g\le (k-1)/4$.)
	\end{proof}
	
	\subsection*{An example and some remarks}
	
	\begin{example}
		Take $k=4$. Then $\lfloor 3/4\rfloor=0$, so the theorem says that beyond
		index $4$ there is \emph{not even one} pair of consecutive zeros exactly $4$
		apart. You can check this against the first $60$ terms of the sequence, shown
		below with each row starting at the index $n$ in the left column; the zeros
		beyond index $4$ are in bold.
		\begin{center}
			\renewcommand{\arraystretch}{1.15}
			\[
			\begin{array}{r|cccccccccc}
			n & \multicolumn{10}{c}{x_n,\ x_{n+1},\ \dots,\ x_{n+9}}\\ \hline
			1  & 0 & 0 & 0 & 1 & 1 & 1 & 1 & 2 & \mathbf{0} & 1 \\
			11 & 2 & \mathbf{0} & \mathbf{0} & 1 & 2 & 2 & 2 & \mathbf{0} & 1 & 2 \\
			21 & \mathbf{0} & \mathbf{0} & 1 & 2 & 2 & 2 & \mathbf{0} & 1 & 2 & \mathbf{0} \\
			31 & \mathbf{0} & 1 & 2 & 2 & 2 & \mathbf{0} & 1 & 2 & \mathbf{0} & \mathbf{0} \\
			41 & 1 & 2 & 2 & 2 & \mathbf{0} & 1 & 2 & \mathbf{0} & \mathbf{0} & 1 \\
			51 & 2 & 2 & 2 & \mathbf{0} & 1 & 2 & \mathbf{0} & \mathbf{0} & 1 & 2
			\end{array}
			\]
		\end{center}		
		
\noindent Past index $4$, the zeros sit at
		\[
		9,\ 12,\ 13,\ 18,\ 21,\ 22,\ 27,\ 30,\ 31,\ 36,\ 39,\ 40,\ 45,\ 48,\ 49,\
		54,\ 57,\ 58,
		\]
		and the gaps between consecutive ones are only ever $1$, $3$ or $5$, never
		$4$. For $k=5$ the bound is $\lfloor 4/4\rfloor=1$: single gaps of $4$ are
		allowed, but never two in a row.
	\end{example}
	
	\begin{remark}[Why $k\equiv 2,3$ are different]
		The same argument with $k\equiv 2$ or $3\pmod 4$ lands on a step
		$n\equiv r+2$ or $r+3\pmod 4$. Those are the steps \emph{into} and
		\emph{out of} the middle entry $*$, and one of them may well be $0$. So
		there is no contradiction, and indeed the $4$-cycles $(0,1,1,2)$ and
		$(0,1,2,2)$ of the main paper are runs that go on forever.
	\end{remark}
	
	\begin{remark}[The bound is sharp in practice]
		Empirical data suggest that Theorem~\ref{thm:main} is the best possible. For
		instance, runs with exactly $\lfloor (k-1)/4\rfloor$ gaps were observed for
		$k=245,297,333$ (where $k\equiv1$) and for $k=288,320,340$ (where
		$k\equiv0$). Proving that such maximal runs always occur would be a nice
		exercise in experimentation, but maybe a hard one in theory!
	\end{remark}

\section{Addendum:  The moment of capture in recurrence (A)}

\section*{The setting}

As in the main paper, fix $k\ge 2$ and let $x_1,x_2,\dots\in\Z/3$ satisfy
\[
x_1=\dots=x_{k-1}=0,\qquad x_k=1,\qquad x_n=x_{n-1}+x_{n-k}^2\quad(n\ge k+1).
\]
Write $b_n=\ind{x_n\neq0}$. Throughout, suppose that $k\equiv 2$ or
$3\pmod 4$ and that the eventual period is $N_A(k)=4$. Put
\[
G=\left\lceil\frac{k-1}{4}\right\rceil .
\]
We use the same notion of run as in the first Addendum: zeros at
$r,r+4,\dots,r+4g$ with $r\ge k+1$ and no other zeros in between. Such a
run has $g$ \emph{gaps}.

By the capture criterion of the main paper, the sequence settles into its
$4$-cycle as soon as some run reaches $G$ gaps. This suggests the following
definitions.

\begin{definition}
	The \emph{capture point} $z=z(k)$ is the smallest index $z$ such that $x$ has
	a run of zeros spaced exactly $4$ apart, with $G$ gaps, ending at $z$. Its
	first zero is $s=z-4G$.
\end{definition}

\begin{definition}
	As usual, the \emph{pre-period} $\mu=\mu_A(k)$ is the least $\mu\ge0$ with
	$x_{n+4}=x_n$ for all $n>\mu$. Equivalently, $\mu+4$ is the last index $n$ at
	which $x_n\neq x_{n-4}$.
\end{definition}

\section*{The guess}

\begin{conjecture}\label{conj}
	For every $k\equiv 2,3\pmod 4$ with $N_A(k)=4$,
	\[
	z(k)-\mu_A(k)=6 .
	\]
	Equivalently, the last disagreement with the $4$-cycle happens exactly two
	steps before the capture point:
	\[
	x_{z-2}\neq x_{z-6},\qquad\text{and}\qquad x_n=x_{n-4}\ \text{ for all } n>z-2 .
	\]
\end{conjecture}

In words: capture always ``clicks in'' in the same way. The final run of
length $G$ is completed, and everything after position $z-2$ is already in
step with the cycle. The cycle is never reached earlier than that, and
there is never a stray mismatch after it.

\begin{remark}[What it says about the run's neighborhood]
	Inside the capturing run, every block between consecutive zeros reads
	$0,1,*,2$. Its middle entry $*$ is determined by a value of $b$ exactly $k$
	places back. Following this through, the conjecture amounts to a small
	condition on the terms just before the run starts at $s$:
	\[
	\begin{array}{ll}
	k\equiv 2\pmod 4: & b_{s-4}=1,\\[2pt]
	k\equiv 3\pmod 4: & b_{s-5}=0\ \text{ and }\ b_{s-1}=1.
	\end{array}
	\]
	So the conjecture is really a statement about how the capturing run begins.
\end{remark}

\section*{The evidence}

The identity $z-\mu=6$ was observed, with no exceptions, for
\[
k=3,\ 6,\ 7,\ 10,\ 11,\ 14,\ 15,\ 19,\ 199,\ 207,\ 243,\ 294,\ 303,\ 327,\ 403,
\]
which covers both residue classes ($k=6,10,14,294$ are $\equiv2$; the rest
are $\equiv3\pmod4$). The pre-periods range from $\mu=5$ (for $k=3$) to
$\mu=104{,}321{,}125{,}322$ (for $k=403$), where the capture point is
$z=104{,}321{,}125{,}328$. A nice first project would be to test more values
of $k$, then to look for the reason. Why should the sequence, after
wandering for $10^{11}$ steps, always arrive at its cycle through the same
narrow door?

\section*{Acknowledgements}
I gratefully acknowledge Bob Harder's original Fortran-based computations in 2000 
(now extended by my C program).
Conversations with both Anthropic Claude Opus 5.5 and OpenAI GPT-6 Luna 
were essential in the preparation of this paper.  No AI funding whatsoever 
for this study was received. Proposition~\ref{prop:Ashape} was anticipated 
in \cite{MP}. My interest in the subject arose long ago in \cite{CF}; 
$x_n = 2\bigl(x_{n-2}^2+x_{n-2}+x_{n-4k-3}^2+x_{n-4k-3}+x_{n-8k-4}^2+x_{n-8k-4}\bigr) \pmod 3$ 
is understood, but our seemingly simpler recurrence (A) here defies explanation.

\end{document}